\newtheorem{theorem}{Theorem}
\newtheorem{corollary}{Corollary}
\newtheorem{proposition}{Proposition}
\newtheorem{lemma}{Lemma}
\newtheorem{remark}{Remark}
\newtheorem{definition}{Definition}
\newtheorem{example}{Example}
\DeclareMathOperator{\diag}{diag}
\newcommand{\bal}[1] {\ensuremath{\left(\begin{array}{#1}}}
\newcommand{\ear} {\ensuremath{\end{array}\right)}}
\newcommand{\bals}[1] {\ensuremath{\left[\begin{array}{#1}}} 
\newcommand{\ears} {\ensuremath{\end{array} \right] }} 
\DeclareMathOperator{\trace}{tr}
\let\leq\leqslant
\let\geq\geqslant
\newcommand{\bmat}{\begin{matrix}}
\newcommand{\emat}{\end{matrix}}
\newcommand{\bbm}{\begin{bmatrix}}
\newcommand{\ebm}{\end{bmatrix}}
\newcommand{\bpm}{\begin{pmatrix}}
\newcommand{\epm}{\end{pmatrix}}
\newcommand{\bse}{\begin{subequations}}
\newcommand{\ese}{\end{subequations}}
\newcommand{\beq}{\begin{equation}}
\newcommand{\eeq}{\end{equation}}
\newcommand{\ben}{\begin{enumerate}}
\newcommand{\een}{\end{enumerate}}
\newcommand{\beni}{\renewcommand{\labelenumi}{\roman{enumi}.}
\renewcommand{\theenumi}{\roman{enumi}}\begin{enumerate}}
\newcommand{\eeni}{\end{enumerate}\renewcommand{\labelenumi}{\arabic{enumi}.}
\renewcommand{\theenumi}{\arabic{enumi}}}
\newcommand{\bena}{\renewcommand{\labelenumi}{\alpha{enumi}.}
\renewcommand{\theenumi}{\alpha{enumi}}\begin{enumerate}}
\newcommand{\eena}{\end{enumerate}\renewcommand{\labelenumi}{\arabic{enumi}.}
\renewcommand{\theenumi}{\arabic{enumi}}}
\newcommand{\bit}{\begin{itemize}}
\newcommand{\eit}{\end{itemize}}
\begin{document}
\title{\LARGE   Attack Resilient Interconnected Second Order Systems: \\A Game-Theoretic Approach}

\author{Mohammad Pirani, Joshua A. Taylor, and Bruno Sinopoli
\thanks{M. Pirani and J. A. Taylor are with the Department of Electrical and Computer Engineering, University of Toronto. E-mail: {\tt \{pirani\}@kth.se}, {\tt \{josh.taylor\}@utoronto.ca}. Bruno Sinopoli is with the Department of Electrical and Systems Engineering, Washington University in St. Louis E-mail: {\tt \{bsinopoli\}@wustl.edu}.}%
}

\newcommand{\tb}{\color{blue}}
\newcommand{\tr}{\color{red}}
\newcommand{\tg}{\color{green}}

\maketitle

\thispagestyle{empty}
\pagestyle{empty}

\begin{abstract}
This paper studies the resilience of second-order networked dynamical systems to strategic attacks. We discuss two widely used control laws, which have applications in power networks and formation control of autonomous agents. In the first control law, each agent receives pure velocity feedback from its neighbor. In the second control law, each agent receives its velocity relative to its neighbors. The attacker selects a subset of nodes in which to inject a signal, and its objective is to maximize the  $\mathcal{H}_2$  norm of the system from the attack signal to the output. The defender improves the resilience of the system by adding self-feedback loops to certain nodes of the network with the objective of minimizing the system's $\mathcal{H}_2$  norm. Their decisions comprise a strategic game. Graph-theoretic necessary and sufficient conditions for the existence of Nash equilibria are presented. In the case of no Nash equilibrium, a Stackelberg game is discussed, and the optimal solution when the defender acts as the leader is characterized. For the case of a single attacked node and a single defense node, it is shown that the optimal location of the defense node for each of the control laws is determined by a specific network centrality measure. The extension of the game to the case of multiple attacked and defense nodes is also addressed. 

\end{abstract}

\section{Introduction}

\subsection{Motivation}
The resilience of cyber-physical systems to strategic attacks is one of the primary concerns in the design level and real-time operation of interconnected systems. Examples of such systems include power networks,  water and gas networks, and transportation systems. A subtle difference between faults and attacks is that in the latter, the attacker uses knowledge of vulnerabilities  to maximize its effect and/or minimize its visibility or effort to attack.  The defender thus has to adopt an intelligent strategy to counter the attacker. One approach to modeling interactions between intelligent attackers and defenders is via game theory. 

\subsection{Related Work}

Security and resilience of cyber-physical systems from the game-theoretic standpoint has attracted attention in the past decade; see \cite{zhubasar, Manshaie, Cedricbasar, Amin2013, Aminn, Piranigame } and references therein. The notion of {\it games-in-games} in cyber-physical systems reflects two  interconnected games, one in the cyber layer and the other in the physical layer, for which the payoff of each game affects the result of the other one \cite{Basarz}. Some approaches discussed appropriate game strategies, e.g., Nash or Stackelberg, based on the type of adversarial behavior (active or passive) \cite{Cedricbasar,EPFL}. The evolution of  networked systems are modeled as cooperative games \cite{Mardenshamma} and the resilience of these games to adversarial actions and/or communication failures are investigated \cite{Brown, Shankar}. There is a large literature on the security of first and second order systems \cite{dibaji2017resilient, shames, Hogan}. To date, no approach uses game theory to model the actions of intelligent attackers and defenders in second order  systems.

\subsection{Contributions}
The contributions of this paper are as follows:

\begin{itemize}
 \item We discuss an attacker-defender game on the resilience of two canonical forms of second order systems.  The attacker targets a set of nodes in the network to maximize the system $\mathcal{H}_2$ norm from the attack signals to the output, while the defender  chooses a set of nodes (to install feedback control) in order to minimize this system norm (or mitigate the effect of the attack). 
 
 \item Necessary and Sufficient conditions for the existence of Nash equilibrium (NE) for the game for each of the two second-order dynamics is discussed (Propositions \ref{prop:necsufgame1} and  \ref{prop:none}). For the cases where there is no NE, a Stackelberg game is discussed when the defender acts as the game leader (Theorems \ref{thm:necsuefgame} and \ref{thm:complexx} and Corollary \ref{cor:costfew}). 
 
 \item  For the case of a single attacked node and a single defense node, it is shown that the optimal location of the defense node in the network for each of the second order systems introduces a specific network centrality measure (Remark \ref{rem:ssodjnf}).

 \item The extension of the game to the case of multiple attacked and defense nodes is also addressed (Theorems \ref{thm:multistack}, and \ref{thm:complex}).\footnote{ Proofs of all the results of this paper are placed in the Appendix.}
\end{itemize}

It worth noting that for resilient distributed control algorithms proposed in the literature,  a large level of network connectivity is  required to bypass the effects of malicious actions \cite{Hogan, AntonioBicchi}. However, in many real-world applications, e.g., power systems, the underlying topology is designed and can not be changed. From this view, the defense mechanism proposed in this paper has an advantage compared to the previous methods in the sense that it does not rely on the connectivity level of the underlying network.

\section{Graph Theory}
\label{sec:not}
We use $\mathcal{G}=\{\mathcal{V},\mathcal{E}\}$ to denote an undirected graph where $\mathcal{V}$ is the set of vertices (or nodes) and   $\mathcal{E}\subseteq \{\{v_i,v_j\}|v_i,v_j\in \mathcal{V}, i\neq j \}$ is the set of undirected edges, where  $e=\{v_i,v_j\}\in \mathcal{E}$ if an only if there exists an undirected edge between $v_i$ and $v_j$.  Let $n=|\mathcal{V}|$. The adjacency matrix of $\mathcal{G}$ is denoted  $A$, where $A_{ij}=1$ if  there is an edge between $v_j$ and $v_i$ in $\mathcal{G}$ and zero otherwise. The {\it neighbors} of vertex $v_i \in \mathcal{V}$ in the graph $\mathcal{G}$ are denoted by the  set $\mathcal{N}_i = \{v_j \in \mathcal{V}~|~\{v_j, v_i\} \in \mathcal{E}\}$. We define the degree  for node $v_i$ as $d_i=\sum_{v_j\in \mathcal{N}_i} A_{ij}$.  The Laplacian matrix of an undirected  graph is denoted by $L = D - A$, where $D = \diag (d_1, d_2, ..., d_n)$. We use $\mathbf{e}_i$ to indicate the $i$-th vector of the canonical basis. The eccentricity $\epsilon (v)$ of a vertex $v$ in a connected graph $\mathcal{G}$ is the maximum graph distance  between $v$ and any other vertex $u \in \mathcal{G}$. The center of a graph is a set of vertices with minimum  eccentricity.   The {\it effective resistance} between a pair of nodes $i$ and $j$, denoted  $R_{ij}$, is the electrical resistance measured across nodes  $i$ and $j$ when the network represents an electrical circuit where each edge $e$ has electrical conductance  $w_e$ \cite{Ghosh2008}. The effective eccentricity $\epsilon_f (v)$ of a vertex $v$ in a connected graph $\mathcal{G}$ is the maximum graph effective resistance between $v$ and any other vertex $u$ of $\mathcal{G}$. The {\it effective center} of a graph is a set of vertices with minimum effective eccentricity.  A degree central node in the network is the node with the largest degree.

\section{System Model and Preliminaries}

Consider a network of  agents $\mathcal{V}$ where each agent follows second-order dynamics
\begin{align}
\dot{x}_i(t)&=v_i(t),\nonumber\\
\dot{v}_i(t)&=u_i(t)+w_i(t),
\label{eqn:111}
\end{align}
where $x_i(t)$ and $v_i(t)$ represent position (or phase) and velocity (or frequency), respectively.  $u_i(t)$ and $w_i(t)$ are the control input and additive disturbance to the dynamics. The control policy can be either of the following two 
 \begin{subequations}
\begin{align}
 u_i&=-\sum_{j\in \mathcal{N}_i}a_{ij}(x_i-x_j)-(b_i+b_0)v_i.\label{subeq2}
\\
u_i&=-\sum_{j\in \mathcal{N}_i}a_{ij}(x_i-x_j)-\sum_{j\in \mathcal{N}_i}b_{ij}(v_i-v_j)-a_0x_i-b_0v_i.
\label{subeq1}
\end{align}
\end{subequations}
Here $a_{ij}, b_{ij}, a_0$ and $b_0$ are nonnegative control gains. 
Control law \eqref{subeq2} uses the {\it relative position} and {\it absolute velocity} as feedbacks whereas \eqref{subeq1} uses both relative position and velocity as control feedbacks. To simplify our analysis, we assume that  $a_{ij}=b_{ij}=1$  and $a_0=b_0=k>0$, where $k$ is called the defender's control gain.\footnote{This parameter is private and only known by the system designer.}  Note that all of the analysis in this paper can be readily extended to the weighted case. Control laws \eqref{subeq2} and \eqref{subeq1} are canonical forms of well-known second-order systems. In particular, \eqref{subeq2} is the linearized swing equation for a network of power generators \cite{swing, PiraniJohnCDC}, and  \eqref{subeq1} describes the formation control of autonomous agents, e.g., a platoon of connected vehicles \cite{atkins}.

\subsection{Attack Model}
 Let $\mathfrak{F}$ denote the set of nodes under attack. The state of a node which is under attack evolves as 
\begin{align}
\dot{x}_i(t)&=v_i(t)+\zeta_{1i}(t),\nonumber\\
\dot{v}_i(t)&=u_i(t)+w_i(t)+\zeta_{2i}(t), \quad i \in \mathfrak{F},
\label{eqn:aodvjn}
\end{align}
where $\zeta_{1i}(t)$ and $\zeta_{2i}(t)$ are the effects of attack signals on the first and the second states, respectively. In vector form, \eqref{eqn:aodvjn} is given by
\begin{align}
\dot{\boldsymbol{X}}={A}\boldsymbol{X}
     +{B}_1\boldsymbol w(t)
     +{B}_2\boldsymbol \zeta(t),
     \label{eqn:powernetwork}
\end{align}
where $\boldsymbol{X}=[\boldsymbol x \quad \dot{\boldsymbol x}]^T$, $\boldsymbol{\zeta}=[\boldsymbol \zeta_1 \quad {\boldsymbol \zeta_2}]^T$. Depending on whether control law \eqref{subeq2} or \eqref{subeq1} is in place, the matrix $A$ respectively takes on the form
\begin{equation}
A=\begin{bmatrix}
       \mathbf{0}_{n} & I_{n}         \\[0.3em]
     -{L} & -H
     \end{bmatrix}, \hspace{0.6 mm} {\rm or} \hspace{1 mm} A=\begin{bmatrix}
       \mathbf{0}_{n} & I_{n}         \\[0.3em]
  -\bar{L} & -\bar{L}
     \end{bmatrix}, 
     \label{eqn:dy}
\end{equation}
where $\bar{L}\triangleq L+kD_y$ and $H\triangleq I_n+kD_y$ where $D_y=\diag(\boldsymbol y)$. $\boldsymbol y$  is a binary vector, i.e.,  $y_i\in \{0,1\}$, whose $i$-th element is one if node $i$ has a self feedback and zero if it does not. We assume that $D_y$ has at least one nonzero diagonal element so that $\bar{L}$ is non-singular \cite{ArxiveRobutness}. Here  $B_1=[\mathbf{0} \quad I_n]^T$ and $B_2=I_2\otimes F$, since we assume that if node $i$ is under attack, then its both states are affected by the attack signal.
Matrix $B_2$ encodes the decisions of the attacker. The $i$-th row of $F$ has a single 1 if node $i$ is affected by the attack, and all zeros otherwise.  
The set of nodes under attack and the set of nodes with feedback (defense nodes) are denoted by $\mathfrak{F}$ and $\mathfrak{D}$, respectively. An example of attacker and defender actions on a networked system is schematically shown in Fig.~\ref{fig:s1} (a).

\subsection{Attacker-Defender Game}
 Because we do not have a priori knowledge of the frequency contents of the attack signal, we must choose a system norm which captures the average impact of all frequencies of the attack input. We therefore choose system $\mathcal{H}_2$ norm, which is widely used to measure the level of coherence in synchronization of coupled oscillators \cite{Bamieh2, Tegling}. We first calculate the $\mathcal{H}_2$ norm of \eqref{eqn:powernetwork}. 

\begin{proposition}\label{prop:h2normsdouble}
The $\mathcal{H}_2$ norms of \eqref{eqn:powernetwork}  from the attack signal $\boldsymbol{\zeta}(t)$ to output $\boldsymbol y=\dot{\boldsymbol x}$  for control laws \eqref{subeq2} and \eqref{subeq1} are
\begin{align}
||G_1||^2_2&=\frac{1}{2}\sum_{i\in \mathfrak{F}}H_{ii}^{-1}d_i+
\frac{1}{2}\sum_{i\in \mathfrak{F}}H_{ii}^{-1},\nonumber\\
||G_2||^2_2&=\frac{1}{2}f+
\frac{1}{2}\sum_{i\in \mathfrak{F}}\bar{L}_{ii}^{-1},
\label{eqn:biaubv}
\end{align}
 where $f$ is the number of attacked nodes, $G_1$ and $G_2$ are transfer functions of  \eqref{eqn:powernetwork} from $\boldsymbol{\zeta}(t)$ to  $\dot{\boldsymbol x}$  for control laws \eqref{subeq2} and \eqref{subeq1}, respectively. $H_{ii}^{-1}$ and $\bar{L}_{ii}^{-1}$ are the $i$-th diagonal elements of $H^{-1}$ and $\bar{L}^{-1}$, respectively.
\end{proposition}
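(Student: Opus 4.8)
The plan is to write each squared $\mathcal{H}_2$ norm as the trace of a Gramian and then collapse that trace onto the diagonal entries of $H^{-1}$ and $\bar L^{-1}$ using the block-companion form of $A$ together with the Kronecker structure $B_2=I_2\otimes F$. With velocity output $\boldsymbol y=C\boldsymbol X$, $C=[\,\mathbf 0\ \ I_n\,]$, I would use the standard Gramian representations $\|G\|_2^2=\trace(C P C^\T)$ and $\|G\|_2^2=\trace(B_2^\T Q B_2)$, where $P$ and $Q$ solve the controllability and observability Lyapunov equations $AP+PA^\T+B_2B_2^\T=0$ and $A^\T Q+QA+C^\T C=0$. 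Partitioning both Gramians into $n\times n$ blocks and noting that $FF^\T$ is the diagonal $0/1$ matrix supported on $\mathfrak F$, the tensor structure makes the off-diagonal blocks drop out of the trace, so that $\trace(CPC^\T)=\trace(P_{22})$ and $\trace(B_2^\T QB_2)=\sum_{i\in\mathfrak F}\big[(Q_{11})_{ii}+(Q_{22})_{ii}\big]$. The entire computation therefore reduces to identifying a few diagonal entries of one block of a Gramian.

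For control law \eqref{subeq1} (the norm $\|G_2\|_2$) this is immediate: since at least one node carries self-feedback, $\bar L=L+kD_y\succ 0$, and because both lower blocks of $A$ equal $-\bar L$, $A$ is Hurwitz and $Q$ is unique. Expanding $A^\T Q+QA+C^\T C=0$ blockwise, I would verify the closed form $Q_{12}=Q_{21}=\mathbf 0$, $Q_{11}=\tfrac12 I_n$, $Q_{22}=\tfrac12\bar L^{-1}$; the equality of the two $-\bar L$ blocks is exactly what makes $Q_{11}$ a scalar multiple of the identity and $Q_{22}$ a scalar multiple of $\bar L^{-1}$. Reading off $(Q_{11})_{ii}=\tfrac12$ and $(Q_{22})_{ii}=\tfrac12\bar L^{-1}_{ii}$ then gives $\|G_2\|_2^2=\tfrac12 f+\tfrac12\sum_{i\in\mathfrak F}\bar L^{-1}_{ii}$.

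For control law \eqref{subeq2} (the norm $\|G_1\|_2$) I would instead work with $P$. The $(1,1)$ block of the Lyapunov equation forces $P_{12}+P_{21}=-FF^\T$, and the symmetric choice $P_{12}=P_{21}=-\tfrac12 FF^\T$ reduces the $(2,2)$ block to a Sylvester equation of the form $HP_{22}+P_{22}H=\tfrac12(L\,FF^\T+FF^\T L)+FF^\T$. Because $H=I_n+kD_y$ is \emph{diagonal}, the $i$th diagonal entry of this equation decouples completely, $2H_{ii}(P_{22})_{ii}=d_i\,[FF^\T]_{ii}+[FF^\T]_{ii}$, giving $(P_{22})_{ii}=\tfrac12 H_{ii}^{-1}(d_i+1)$ on the attacked nodes. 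Summing over $\mathfrak F$ splits into the two advertised terms $\tfrac12\sum_{i\in\mathfrak F}H_{ii}^{-1}d_i$ and $\tfrac12\sum_{i\in\mathfrak F}H_{ii}^{-1}$.

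The step I expect to be the main obstacle is the legitimacy of this last computation, because $A$ in \eqref{subeq2} is only marginally stable: $L\mathbf 1=0$ produces the eigenvector $[\,\mathbf 1\ \ \mathbf 0\,]^\T$ with eigenvalue $0$ (the common drift of the positions/phases), so $A$ is singular and the full controllability Gramian $P$ does not exist as a bounded matrix. The velocity-to-output map is nevertheless square-integrable because this mode is unobservable, $C[\,\mathbf 1\ \ \mathbf 0\,]^\T=\mathbf 0$, so $\trace(P_{22})=\int_0^\infty\|Ce^{At}B_2\|_F^2\,dt$ is finite and equals $\|G_1\|_2^2$. The careful part is to justify solving for $P_{22}$ from the block equations while $P_{11}$ diverges---equivalently, to carry out the computation on the observable quotient by $\spa\{[\,\mathbf 1\ \ \mathbf 0\,]^\T\}$---and to confirm that the diagonal of the cross block $P_{12}$ really is $-\tfrac12 FF^\T$ so that it contributes nothing spurious to the diagonal equation for $P_{22}$. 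Positive definiteness of $H$ (and of $\bar L$ in the other case) guarantees the relevant Sylvester operators are invertible, which is what makes the diagonal extraction well posed once the drift mode has been removed.
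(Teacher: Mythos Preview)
For $\|G_2\|_2$ your argument coincides with the paper's: both solve the observability Lyapunov equation blockwise and obtain $Q_{11}=\tfrac12 I_n$, $Q_{22}=\tfrac12\bar L^{-1}$, $Q_{12}=0$. For $\|G_1\|_2$, however, the paper does \emph{not} switch to the controllability Gramian; it stays with the observability Gramian throughout. Because the marginally stable mode $v=[\mathbf 1_n^{\sf T}\ \mathbf 0_n^{\sf T}]^{\sf T}$ is unobservable ($Cv=0$), the integral $\mathcal W_o=\int_0^\infty e^{A^{\sf T} t}C^{\sf T} Ce^{At}\,dt$ converges, and the paper singles out the unique solution of $\mathcal W_oA+A^{\sf T}\mathcal W_o=-C^{\sf T} C$ obeying the algebraic constraint $\mathcal W_o v=0$. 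Solving the block system yields $W_{22}=\tfrac12 H^{-1}$, $W_{11}=\tfrac12 LH^{-1}$, $W_{12}=0$, and then $\|G_1\|_2^2=\trace(B_2^{\sf T}\mathcal W_oB_2)=\sum_{i\in\mathfrak F}\bigl[(W_{11})_{ii}+(W_{22})_{ii}\bigr]$, with $(W_{11})_{ii}=\tfrac12 d_iH_{ii}^{-1}$ since $H$ is diagonal. The marginal mode is handled purely by the linear constraint $\mathcal W_ov=0$; no divergent block ever appears.

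Your controllability route for $G_1$ has a gap precisely at the step you build on. You read $P_{12}+P_{12}^{\sf T}=-FF^{\sf T}$ off the $(1,1)$ block of the steady-state equation $AP+PA^{\sf T}+B_2B_2^{\sf T}=0$, but that block identity requires $P_{11}(T)$ to converge. Here the mode $v$ \emph{is} controllable (the attack component $\boldsymbol\zeta_1$ drives the positions directly), so $P_{11}(T)$ grows linearly and $\dot P_{11}(T)\to\alpha\,\mathbf 1\mathbf 1^{\sf T}\neq 0$; the correct limiting relation is $P_{12}(\infty)+P_{12}(\infty)^{\sf T}=-FF^{\sf T}+\alpha\,\mathbf 1\mathbf 1^{\sf T}$, not $-FF^{\sf T}$. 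Moreover, what feeds the $(2,2)$ block is the diagonal of $LP_{12}+P_{12}^{\sf T} L$, which depends on \emph{all} entries of $P_{12}$ (its skew part included), not merely its diagonal; pinning those down requires the $(1,2)$ block equation, which in turn involves the finite part of the divergent $P_{11}$. So the ``symmetric choice'' $P_{12}=-\tfrac12 FF^{\sf T}$ is not justified, and the concern you flag (checking only the diagonal of $P_{12}$) understates what must be verified. Using the observability Gramian, as the paper does, avoids this entirely.
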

Now, we formally define the attacker-defender game.
\begin{definition}[\textbf{Attacker-Defender Game}]
The attacker chooses a set of $f$ nodes to attack, $\mathfrak{F}\subseteq \mathcal{V}$, in order to maximize the  $\mathcal{H}_2$ norm from the attack signal $\boldsymbol \zeta (t)$ to the output $\boldsymbol y=\dot{\boldsymbol x}$. The defender places local feedback control at $f$ nodes, $\mathfrak{D}\subseteq \mathcal{V}$, to minimize the system $\mathcal{H}_2$ norm.\footnote{Due to the lack of knowledge of the number of attack signals,  the defender considers $f$ as an upper bound of the number of attacked nodes and acts based on this worst-case scenario.} The result is a zero-sum game in which the payoff, based on \eqref{eqn:biaubv}, is given by
\begin{align}\label{eqn:objectives}
J_1(\mathfrak{F}, \mathfrak{D})&=\frac{1}{2}\sum_{i\in \mathfrak{F}}H_{ii}^{-1}d_i
+
\frac{1}{2}\sum_{i\in \mathfrak{F}}H_{ii}^{-1},\nonumber\\
J_2(\mathfrak{F}, \mathfrak{D})&=\frac{1}{2}f+
\frac{1}{2}\sum_{i\in \mathfrak{F}}\bar{L}_{ii}^{-1}.
\end{align}
The set of attacked nodes $\mathfrak{F}$ determine matrix $B_2$,  and the set of defense nodes   $\mathfrak{D}$ determines matrix $D_y$  and consequently matrices $H$ and $\bar{L}$  in \eqref{eqn:dy}.
\end{definition}

The actions of the attacker and the defender, when $f$ nodes are under attack and ${f}$ nodes are defended,  define a matrix game $\mathcal{M}_{\binom{n}{{f}}\times \binom{n}{{f}}}$. Here $\mathcal{M}_{ij}=J(\mathfrak{F}_j,\mathfrak{D}_i)$, where $\mathfrak{F}_j$ corresponds to the set chosen by the attacker and $\mathfrak{D}_i$ corresponds to the set chosen by the defender. In other words, the attacker,  the maximizer, chooses columns of matrix $\mathcal{M}$ and the defender, the minimizer, chooses the rows.

\begin{figure}[t!]
\centering
\includegraphics[scale=.35]{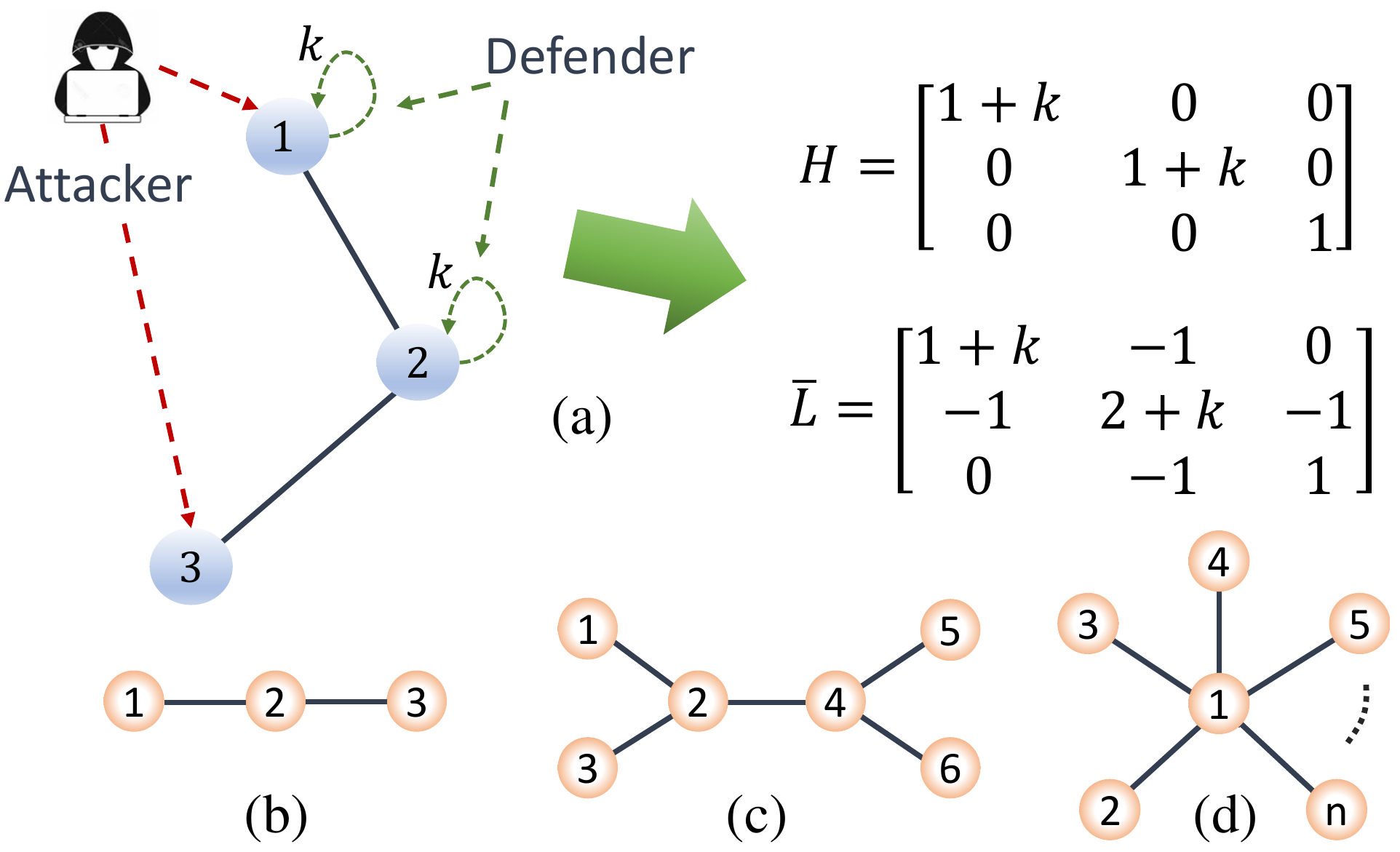}
\caption{(a) An example of an attacker-defender game and matrices $H$ and $\bar{L}$, (b) graph topology discussed in example \ref{exp:alfdkf}, (c) a graph structure where $\Delta_1=\Delta_2$, thus does not admit NE, (d) star graph admits largest threshold for $k$ over all connected graphs as $\Delta_1=n-1$ and $\Delta_2=1$. }
\label{fig:s1}
\end{figure}

\section{Attacker-Defender Game on $J_1(\mathfrak{F}, \mathfrak{D})$}

In this section, we discuss  equilibrium strategies for the attacker-defender game when the control law is \eqref{subeq2}. First, consider a single attacked node and single defense node. 

\subsection{Single Attacked-Single defense Nodes}

In this case, attacker's payoff is 
\begin{align}\label{eqn:objectsives}
J(\mathfrak{F}, \mathfrak{D})=\frac{H_{ii}^{-1}}{2}\left( d_i+1 \right), \quad i\in \mathfrak{F}.
\end{align}
A Nash equilibrium may not exist, as discussed in the following example. 
\begin{example}
For the path graph of length 3 shown in Fig.~\ref{fig:s1} (b), payoff matrix becomes
\begin{align}
\mathcal{M}=\frac{1}{2}\begin{bmatrix}
      \frac{2}{k+1} & 3 & 2        \\[0.3em]
     2 &  \frac{3}{k+1} & 2 \\[0.3em]
     2 & 3 & \frac{2}{k+1}
     \end{bmatrix},
     \label{eqn:graminnn}
\end{align}
where the attacker (maximizer) chooses columns and the defender (minimizer) chooses the rows. For $k\leq \frac{1}{2}$ both the attacker and defender choose node 2 at NE, and the equilibrium payoff is
 $J^*=\frac{3}{2k+2}$. For $k$ bigger than this threshold, there is no NE for the game.
\label{exp:alfdkf}
\end{example}

The following is a necessary and sufficient condition for the existence of an NE for the attacker-defender game. 
\begin{proposition}\label{prop:necsufgame1}

Suppose that in the game on $J_1(\mathfrak{F}, \mathfrak{D})$ in  \eqref{eqn:objectives}, there are one attacked and one defense nodes.
Then there exists an NE if and only if $k\leq \frac{\Delta_1-\Delta_2}{\Delta_2+1}$, where $\Delta_1$ and $\Delta_2$ are the largest and second largest degrees of nodes in graph $\mathcal{G}$. In this case, the game value is $J^*=\frac{\Delta_1+1}{2k+2}$ and the NE strategy is that both attacker and defender choose the node(s) with the largest degree.
\end{proposition}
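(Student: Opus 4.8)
The plan is to reduce the claim to an elementary saddle-point analysis of the finite matrix game $\mathcal{M}$, exploiting that with a single defended node the matrix $H$ is diagonal. First I would note that if the defender places feedback at node $j$ then $D_y=\mathbf{e}_j\mathbf{e}_j^{\T}$ and $H=I_n+k\mathbf{e}_j\mathbf{e}_j^{\T}$, so $H^{-1}$ is diagonal with $H_{ii}^{-1}=1$ for $i\neq j$ and $H_{jj}^{-1}=1/(1+k)$. Substituting into \eqref{eqn:objectsives}, the payoff when the defender chooses row $p$ and the attacker chooses column $q$ is $\mathcal{M}_{pq}=\tfrac{d_q+1}{2}$ if $q\neq p$, and $\mathcal{M}_{pq}=\tfrac{d_q+1}{2(1+k)}$ if $q=p$. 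Since the game is finite and zero-sum, a pure-strategy NE exists if and only if the lower and upper values coincide, $\max_q\min_p\mathcal{M}_{pq}=\min_p\max_q\mathcal{M}_{pq}$, so I would compute each value separately.

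For the lower value (the attacker commits to a column $q$), the defender's best response is to defend the attacked node itself, since $\tfrac{d_q+1}{2(1+k)}<\tfrac{d_q+1}{2}$; hence $\min_p\mathcal{M}_{pq}=\tfrac{d_q+1}{2(1+k)}$, and maximizing over $q$ gives lower value $\tfrac{\Delta_1+1}{2(1+k)}$, attained at a largest-degree node. For the upper value (the defender commits to a row $p$), I would argue that defending a largest-degree node $v_1$ is optimal: any other choice leaves $v_1$ undefended and lets the attacker obtain $\tfrac{\Delta_1+1}{2}$, whereas defending $v_1$ caps the attacker at $\max\{\tfrac{\Delta_1+1}{2(1+k)},\tfrac{\Delta_2+1}{2}\}\le\tfrac{\Delta_1+1}{2}$, the attacker's two relevant responses being to attack the defended node $v_1$ or an undefended second-largest-degree node. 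This yields upper value $\max\{\tfrac{\Delta_1+1}{2(1+k)},\tfrac{\Delta_2+1}{2}\}$.

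Equating the two values, a saddle point exists if and only if $\tfrac{\Delta_1+1}{2(1+k)}\ge\tfrac{\Delta_2+1}{2}$, which rearranges to $k\le\tfrac{\Delta_1-\Delta_2}{\Delta_2+1}$; when this holds the common value is $\tfrac{\Delta_1+1}{2k+2}$ and both players play a largest-degree node, matching the statement. The point requiring the most care is the bookkeeping when several nodes share the maximum degree: then $\Delta_2=\Delta_1$, the threshold collapses to $0$, and since $k>0$ the upper value $\tfrac{\Delta_2+1}{2}$ strictly exceeds the lower value, so no NE exists, consistent with Fig.~\ref{fig:s1}(c). I expect the only real obstacle to be justifying rigorously that defending a largest-degree node is minimax-optimal (the $\min_p\max_q$ step); the remainder is direct substitution and a one-line inequality manipulation.
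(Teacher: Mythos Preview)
Your proposal is correct and follows essentially the same approach as the paper: both compute the explicit entries of the matrix game $\mathcal{M}$ (diagonal $\tfrac{d_j+1}{2(1+k)}$, off-diagonal $\tfrac{d_j+1}{2}$) and then perform a direct saddle-point analysis. Your organization via the upper and lower values $\min_p\max_q$ and $\max_q\min_p$ is slightly more systematic than the paper's, which instead verifies the two no-deviation inequalities at the candidate profile and then, for necessity, assumes the NE is at that profile; your framework handles the necessity direction more cleanly, but the substantive content is the same.
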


\begin{remark}
According to Proposition \ref{prop:necsufgame1}, the value of $k$ which ensures the existence of NE is limited by the gap between the largest and the second largest degrees in the network. For the cases where this does not hold, e.g., when the node with the largest degree is not unique as in Fig.~\ref{fig:s1} (c), there is no NE. Moreover,  the largest possible threshold for graphs on $n$ vertices corresponds to the star graph in which the threshold becomes $\frac{n-2}{2}$, as in Fig.~\ref{fig:s1} (d).
\end{remark}

When there is no NE, we instead analyze a Stackelberg game in which the defender acts as the leader. We can write $J_1(\mathfrak{F},\mathfrak{D})$ in \eqref{eqn:objectives} as
 $$J_1(\mathfrak{F},\mathfrak{D})=\frac{1}{2}\trace \left(F^T({L}+I)H^{-1}F \right).$$
As leader, the defender solves the following optimization problem
 \begin{equation}\label{eqn:stackd}
J^*(D_y)=\min_{D_y}\frac{1}{2}\trace \left(F^{*^T}(D_y)({L}+I)H^{-1}F^{*^T}(D_y) \right)
\end{equation}
where $D_y$ is chosen over all $f$ defense nodes in $\mathcal{V}$. $F^*(D_y)$ is the best response of the attacker when the strategy of the defender is $D_y$, i.e., $F^*(D_y)$ is the solution of the following optimization problem
 \begin{equation}\label{eqn:stack}
F^*(D_y)=\arg\max_F\frac{1}{2}\trace \left(F^T({L}+I)H^{-1}F \right),
\end{equation}
where $F$ is chosen over all ${f}$ attacked nodes in $\mathcal{V}$.
 Unlike NE,  a Stackelberg game always admits an equilibrium strategy.
 \begin{remark}
We  note that for the attacker to play the Stackelberg game, i.e., find the optimal strategy \eqref{eqn:stack}, it is not  necessary to know the exact value of the feedback gain $k$. According to proposition \ref{prop:necsufgame1} and Theorem \ref{thm:multistack}, which comes later, it is sufficient for the attacker to only know if $k$ is above or below the threshold $\frac{\Delta_1-\Delta_2}{\Delta_2+1}$ in order to find its best response strategy.
\end{remark}

The following theorem,  characterizes the equilibrium of the Stackelberg game.

\begin{theorem}\label{thm:necsuefgame}
Consider a Stackelberg attacker-defender game on $J_1(\mathfrak{F}, \mathfrak{D})$ in \eqref{eqn:objectives}  in which there exists a single attacked node and single defense node, the defender as the game leader, and $k> \frac{\Delta_1-\Delta_2}{\Delta_2+1}$.  Then the equilibrium strategy corresponds to the case where the defender chooses $v=\arg\max_{i\in \mathcal{V}}d_i$, i.e., the node with the largest degree. In this case, the attacker's best response will be $\bar{v}=\arg\max_{i\in \mathcal{V}\setminus v}d_i$, i.e., the node with the second largest degree.
\end{theorem}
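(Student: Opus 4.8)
The plan is to exploit the diagonal structure of $H = I_n + k D_y$. Since $D_y = \diag(\boldsymbol y)$ with $y_i \in \{0,1\}$, the scalar $H_{ii}^{-1}$ takes only two values: it equals $\frac{1}{1+k}$ when $i$ is the single defended node and $1$ otherwise. Substituting this into the single-node payoff \eqref{eqn:objectsives}, the payoff when the attacker hits node $i$ while the defender protects node $j$ is $\frac{d_i+1}{2(1+k)}$ if $i=j$ and $\frac{d_i+1}{2}$ if $i\neq j$. This reduces the whole problem to comparing a handful of scalar quantities.

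First I would compute the attacker's best response as a function of the defender's committed choice $j$. Because the attacker is the maximizer and it either hits the protected node or some unprotected node of largest possible degree, its best-response value is
$$V(j)=\max\left\{\frac{d_j+1}{2(1+k)},\ \frac{1}{2}\left(\max_{i\neq j}d_i+1\right)\right\}.$$
As the Stackelberg leader, the defender then selects $j$ to minimize $V(j)$, so the theorem amounts to showing that $v=\arg\max_i d_i$ minimizes $V$ and identifying the resulting best response.

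Next I would evaluate $V$ at the candidate optimum $v$, where $d_v=\Delta_1$ and $\max_{i\neq v}d_i=\Delta_2$, giving $V(v)=\max\{\frac{\Delta_1+1}{2(1+k)},\frac{\Delta_2+1}{2}\}$. The main step, and the only nontrivial one, is to show that the hypothesis $k>\frac{\Delta_1-\Delta_2}{\Delta_2+1}$ is exactly equivalent to $\frac{\Delta_1+1}{1+k}<\Delta_2+1$: multiplying through by $1+k>0$ and rearranging reduces this to $\Delta_1-\Delta_2<k(\Delta_2+1)$. Hence under the hypothesis the second branch of the max dominates, $V(v)=\frac{\Delta_2+1}{2}$, and the attacker is pushed off the protected node, its best response being to strike the second-largest-degree node $\bar v=\arg\max_{i\neq v}d_i$.

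Finally I would establish optimality by showing $V(j)\geq V(v)$ for every $j\neq v$. For such $j$ the top-degree node $v$ is left undefended, so $\max_{i\neq j}d_i=\Delta_1$, and therefore $V(j)\geq\frac{\Delta_1+1}{2}\geq\frac{\Delta_2+1}{2}=V(v)$. Thus committing to the largest-degree node minimizes the attacker's best-response payoff, which is precisely the claimed equilibrium. I expect the threshold equivalence in the third paragraph to be the crux, since it is exactly the condition that forces the attacker to abandon the defended node; the remaining comparisons are immediate once the two branches of the max are written down.
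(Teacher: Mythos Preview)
Your proposal is correct and follows essentially the same approach as the paper's proof: both reduce the game matrix to the two cases $\frac{d_j+1}{2(1+k)}$ and $\frac{d_j+1}{2}$, compute the row maxima (attacker's best responses), and compare the max-degree row against all others. Your write-up is actually somewhat more careful than the paper's, since you explicitly verify that the threshold $k>\frac{\Delta_1-\Delta_2}{\Delta_2+1}$ is precisely the condition $\frac{\Delta_1+1}{1+k}<\Delta_2+1$ needed to ensure the diagonal entry in the max-degree row is dominated; the paper simply asserts the row maximum is $\frac{\Delta_2+1}{2}$ without isolating that step.
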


The following example discusses the role of the threshold $\bar{k}=\frac{\Delta_1-\Delta_2}{\Delta_2+1}$ in the attacker's strategy.

\begin{example}
For the graph shown in Fig.~\ref{fig:s2} we have $\Delta_1=3, \Delta_2=2$. Hence, the threshold is $\bar{k}=\frac{1}{3}$. The attacker's decisions are plotted with respect to the defender's best action, i.e., the node with the largest degree. For $k=0.1<\bar{k}$, the attacker's best action is the node with the largest degree (as follows from Proposition \ref{prop:necsufgame1}) and for $k=1>\bar{k}$, the attacker's best response is the node with the second largest degree (as follows from Theorem  \ref{thm:necsuefgame}). For $k=\bar{k}$, the payoff will be the same when the attacker chooses either nodes 3 or 4.
\end{example}

\begin{figure}[t!]
\centering
\includegraphics[scale=.35]{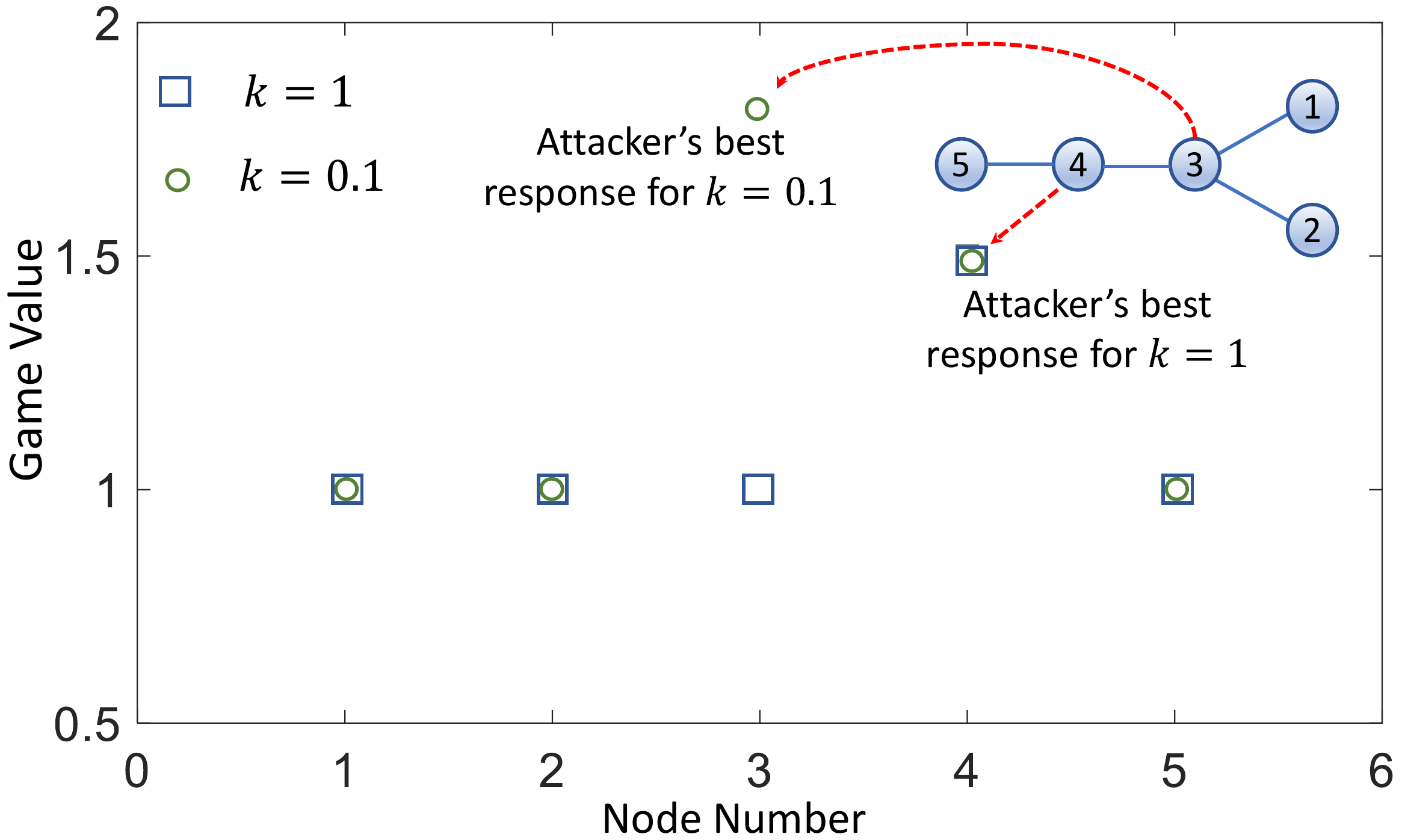}
\caption{The effect of the feedback value $k$ on attacker's best response. The defender has chosen node 3 (its optimal decision). }
\label{fig:s2}
\end{figure}

\subsection{Multiple Attacked-Multiple Defense Nodes}

Now consider the case that there exist $f$ attacked nodes and $f$ defense nodes, i.e., $|\mathfrak{F}|=|\mathfrak{D}|=f\geq 1$. Here we only consider a Stackelberg setup as it is more applicable to security problems \cite{Manshaie}. We remark that if the defender is the leader, it reflects the defender's need to consider the worst case. Thus, it is more convenient to have the defender as the game leader.  

The Stackelberg game is a combinatorial problem. Thus, in general, its computational cost would be high, unless it is reduced with specific assumptions.  With this in mind, in our problem, finding optimal defense nodes when the defender is the game leader is burdensome, unless the control gain $k$ is sufficiently large and the number of attacked nodes is sufficiently small. 

\begin{theorem}\label{thm:multistack}
Consider a Stackelberg attacker-defender game on $J_1(\mathfrak{F}, \mathfrak{D})$ in \eqref{eqn:objectives} where there exists $f$ attacked nodes and $f$ defense nodes, $f \geq 1$ and $n\geq 2f$, with the defender as the game leader.  If 
$k\geq \frac{1}{2}(fd_{\rm \max}-2)$ then at the equilibrium the defender chooses $v=\arg\max_{\substack{\mathfrak{D} \subseteq \mathcal{V}\\|\mathcal{D}|=f}} \sum_{v_i \in \mathfrak{D}}d_i$, i.e., $f$ nodes with the largest degrees in the network. The best response of the attacker is to choose $\bar{v}=\arg\max_{\substack{\mathfrak{F}\subseteq \mathcal{V}\setminus v\\|\mathcal{F}|=f}} \sum_{v_i \in \mathfrak{F}}d_i$.
\end{theorem}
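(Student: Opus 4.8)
The plan is to exploit the fact that $H=I_n+kD_y$ is diagonal, so that $H_{ii}^{-1}=\frac{1}{1+k}$ when $i\in\mathfrak{D}$ and $H_{ii}^{-1}=1$ otherwise. The payoff \eqref{eqn:objectives} then collapses to the separable form $J_1(\mathfrak{F},\mathfrak{D})=\frac12\sum_{i\in\mathfrak{F}}\phi_i$, where $\phi_i=\frac{d_i+1}{1+k}$ if $i$ is defended and $\phi_i=d_i+1$ if it is not. Because the objective is additive over the attacked set and $|\mathfrak{F}|=f$ is fixed, the attacker's best response to any fixed $\mathfrak{D}$ is simply to attack the $f$ nodes with the largest values $\phi_i$. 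This reduces the whole Stackelberg problem to a purely combinatorial min--max over node selections, which is what I would work with throughout.

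First I would establish a lower bound on the game value that holds for \emph{every} defender action. Fix any $\mathfrak{D}$ with $|\mathfrak{D}|=f$; since $n\geq 2f$ there are at least $f$ undefended nodes, and the attacker may always choose the $f$ undefended nodes of largest degree. Writing $d_{(1)}\geq\cdots\geq d_{(n)}$ for the sorted degrees, I claim the $j$-th largest undefended degree is at least $d_{(f+j)}$: among the $f+j$ highest-degree nodes at most $f$ lie in $\mathfrak{D}$, hence at least $j$ of them are undefended, and each such node has degree $\geq d_{(f+j)}$. Summing over $j=1,\dots,f$ shows the attacker secures at least $V^\star:=\frac12\sum_{i=f+1}^{2f}(d_{(i)}+1)$ no matter which $\mathfrak{D}$ the defender plays, so $V^\star$ is a universal lower bound when the defender leads.

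Next I would show the proposed defense $\mathfrak{D}^\star=v=\arg\max_{|\mathfrak{D}|=f}\sum_{v_i\in\mathfrak{D}}d_i$ (the $f$ largest-degree nodes) attains $V^\star$. Under $\mathfrak{D}^\star$ the undefended nodes are exactly those of ranks $f+1,\dots,n$, so attacking the undefended top-$f$ yields precisely $V^\star$ and realizes the set $\bar v$ in the statement. It then remains to rule out the attacker gaining by hitting any defended node, and this is where the hypothesis on $k$ enters: the damping factor $\frac{1}{1+k}$ shrinks every defended value, and $k\geq\frac12(fd_{\max}-2)$ is equivalent to $\frac{fd_{\max}}{2(1+k)}\leq 1$, which bounds the aggregate damped payoff of attacking all $f$ defended (highest-degree) nodes below the payoff the attacker already obtains from the undefended set. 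Comparing the best defended value $\frac{d_{\max}+1}{1+k}$ against the smallest of the selected undefended values $d_{(2f)}+1$ then shows no swap into $\mathfrak{D}^\star$ is profitable, so the attacker's best response is indeed the top-$f$ undefended nodes and $S_{\mathfrak{D}^\star}=V^\star$. Matching the universal lower bound, $\mathfrak{D}^\star$ is optimal.

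I expect the quantitative threshold on $k$ to be the main obstacle: one must verify that the stated bound $\frac12(fd_{\max}-2)$ is genuinely sufficient for the separable comparison above, carefully accounting for the $+1$ shifts and the factor $\frac12$, and in particular that it dominates the worst case over all degree profiles with the prescribed $d_{\max}$. The combinatorial interlacing claim, while elementary, also needs to be stated with care, since it is precisely what guarantees that $\mathfrak{D}^\star$ minimizes the universal lower bound rather than merely neutralizing one candidate attacker response.
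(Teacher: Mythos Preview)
Your proposal follows essentially the same logic as the paper: write $J_1$ in the separable form $\tfrac12\sum_{i\in\mathfrak F}\phi_i$ with $\phi_i=(d_i+1)/(1+k)$ or $d_i+1$, use the threshold on $k$ to argue that the attacker's best response avoids defended nodes, and then conclude the optimal defense is the top-$f$ degree set. The paper carries this out by bounding the total ``defended'' part of the payoff by $2$ (its inequality \eqref{eqn:ineqqq}) and then simply asserting that $\min_{\mathfrak D}\max_{\mathfrak F\subseteq\mathcal V\setminus\mathfrak D}\sum_{j\in\mathfrak F}d_j$ is attained at the $f$ highest-degree nodes; your interlacing observation (the $j$-th largest undefended degree is at least $d_{(f+j)}$) supplies exactly the justification the paper omits for that last step and gives the matching lower bound $V^\star$ cleanly. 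Your caution about the threshold is well placed and is precisely where both arguments hinge: the swap comparison you need, $\tfrac{d_{\max}+1}{1+k}\le d_{(2f)}+1$, follows from $1+k\ge \tfrac{f d_{\max}}{2}$ together with $d_{(2f)}\ge 1$ once $f\ge 2$, which is the regime in which the paper's inequality \eqref{eqn:ineqqq} is actually doing work as well.
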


\section{Attacker-Defender Game on $J_2(\mathfrak{F}, \mathfrak{D})$}
\label{sec:oegb}
In this section, we discuss the dynamics with control law \eqref{subeq1} and objective function $J_2(\mathfrak{F}, \mathfrak{D})$ in \eqref{eqn:objectives}.

\subsection{Single Attacked, Single Defense Nodes}

Similar to the case of $J_1(\mathfrak{F}, \mathfrak{D})$, we start with the case of  single attacked and  single defense nodes. We first have the following proposition.
\begin{proposition}
The  attacker-defender game on $J_2(\mathfrak{F}, \mathfrak{D})$ in \eqref{eqn:objectives}  with a single attack and single defense node does not admit an NE. \label{prop:none}
\end{proposition}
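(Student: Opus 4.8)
The plan is to reduce this $2$-player zero-sum matrix game to a single, strikingly simple scalar quantity and then run a one-line minimax argument. With $|\mathfrak{F}|=|\mathfrak{D}|=1$, write $p$ for the defended node and $q$ for the attacked node. Then $D_y=\mathbf{e}_p\mathbf{e}_p^\T$, $\bar{L}=L+k\mathbf{e}_p\mathbf{e}_p^\T$, and the matrix game has entries $\mathcal{M}_{pq}=\frac{1}{2}+\frac{1}{2}[(L+k\mathbf{e}_p\mathbf{e}_p^\T)^{-1}]_{qq}$, with rows indexed by the minimizing defender $p$ and columns by the maximizing attacker $q$. Since a finite zero-sum game always has a mixed equilibrium, the statement ``no NE'' must be read as ``no pure saddle point,'' so it suffices to show the strict gap $\max_q\min_p \mathcal{M}_{pq} < \min_p\max_q \mathcal{M}_{pq}$.

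The crux is the closed-form identity $[(L+k\mathbf{e}_p\mathbf{e}_p^\T)^{-1}]_{qq}=R_{pq}+\tfrac{1}{k}$, where $R_{pq}$ is the effective resistance between $p$ and $q$ in $\mathcal{G}$ (with $R_{pp}=0$). I would prove it by augmenting $\mathcal{G}$ with a grounded node $g$ joined to $p$ by an edge of conductance $k$: the Laplacian of the augmented graph, grounded at $g$, is exactly $L+k\mathbf{e}_p\mathbf{e}_p^\T$, so the $(q,q)$ entry of its inverse equals the effective resistance from $q$ to $g$. Because $g$ is a pendant node attached only to $p$, that resistance splits in series as $R_{pq}+\tfrac{1}{k}$. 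The constant $\frac{1}{2}+\frac{1}{2k}$ is common to every entry, so up to this harmless shift the payoff is simply $\frac{1}{2}R_{pq}$, and the entire game is governed by the effective-resistance matrix of $\mathcal{G}$.

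With this identity the minimax computation is immediate. For a fixed attacked column $q$, the defender minimizes $R_{pq}$ over $p$, attaining $R_{qq}=0$ at $p=q$; hence every column minimum lies on the diagonal and $\max_q\min_p R_{pq}=0$. For a fixed defended row $p$, the attacker maximizes $R_{pq}$ over $q$, attaining the effective eccentricity $\epsilon_f(p)=\max_q R_{pq}>0$ at some $q\neq p$; hence $\min_p\max_q R_{pq}=\min_p \epsilon_f(p)>0$. Thus the lower value $0$ is strictly below the upper value $\min_p\epsilon_f(p)$ whenever the connected graph has $n\geq 2$ nodes, so no entry can be simultaneously a column minimum (forced onto the diagonal, value $0$) and a row maximum (forced off the diagonal, value $>0$), and no pure saddle point exists. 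The intuition is clean: the defender always wants to shield exactly the attacked node, but once a node is shielded the attacker always prefers to strike the effective-resistance-farthest node instead, so the two can never agree.

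The main obstacle I anticipate is the rigorous justification of the effective-resistance identity, specifically the two electrical-network facts that (i) the $(q,q)$ entry of a grounded-Laplacian inverse equals the effective resistance from $q$ to the ground set, and (ii) a pendant grounding edge adds in series. Both are standard but should be stated carefully, together with the standing assumptions that $\mathcal{G}$ is connected (so $\bar{L}$ is nonsingular and $R_{pq}<\infty$) and $n\geq 2$ (so the value gap is strict). Everything after the identity collapses to the single-line minimax argument above.
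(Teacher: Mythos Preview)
Your proposal is correct and follows essentially the same route as the paper: both derive the identity $[\bar{L}^{-1}]_{qq}=R_{pq}+\tfrac{1}{k}$ via the pendant virtual-ground construction, observe that every diagonal entry of $\mathcal{M}$ equals $\tfrac{1}{2}+\tfrac{1}{2k}$ while every off-diagonal entry is strictly larger, and conclude that no entry can be simultaneously a row maximum and a column minimum. The only cosmetic difference is that you phrase the last step as a strict gap between lower and upper values, whereas the paper argues directly that the saddle-point inequalities $[\mathcal{M}]_{i^*j}\leq [\mathcal{M}]_{i^*j^*}\leq [\mathcal{M}]_{ij^*}$ are violated in either case.
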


Similar to the attacker-defender game on $J_1(\mathfrak{F}, \mathfrak{D})$, in the absence of NE, an optimal defense strategy can be determined by finding the solution of the Stackelberg game.
Recalling the notion of the {\it effective center of a graph}, from Section \ref{sec:not}, we have the following theorem which is proven in Appendix \ref{sec:thm44}.
\begin{theorem}\label{thm:complexx}
Consider the Stackelberg attacker-defender game on $J_2(\mathfrak{F}, \mathfrak{D})$ in \eqref{eqn:objectives} on graph $\mathcal{G}$ with the defender as the game leader.  Then, a solution of the game corresponds to the case when the defender chooses the effective center of  $\mathcal{G}$, i.e., $\mathfrak{D}^*=\arg\min_{v \in \mathcal{V}}\epsilon_f (v)$. In this case, the best response of the attacker will be $\mathcal{B}^*(\mathfrak{D})=\arg\max_{j\in \mathcal{V}}R_{\mathfrak{D}^*j}$, i.e., a node with the maximum effective resistance from $\mathfrak{D}^*$.
\end{theorem}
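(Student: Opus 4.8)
The plan is to reduce the single-attack, single-defense Stackelberg game to a combinatorial min--max over the diagonal entries of $\bar{L}^{-1}$, and then to identify each such diagonal entry with an effective resistance in a suitably augmented electrical network. With $|\mathfrak{F}|=|\mathfrak{D}|=1$, write the defense node as $d$, so that $D_y=\mathbf{e}_d\mathbf{e}_d^T$ and $\bar{L}=L+k\mathbf{e}_d\mathbf{e}_d^T$. By \eqref{eqn:objectives}, if the attacker strikes node $i$ the payoff is $J_2=\tfrac12+\tfrac12(\bar{L}^{-1})_{ii}$; the additive constant is irrelevant, so the attacker's best response solves $\max_{i}(\bar{L}^{-1})_{ii}$ and, as leader, the defender solves
\[
\min_{d\in\mathcal{V}}\ \max_{i\in\mathcal{V}}\ (\bar{L}^{-1})_{ii}.
\]

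First I would set up the electrical interpretation. Treat $\mathcal{G}$ as a resistor network with unit-conductance edges, adjoin a single ground node $g$, and join $g$ to $d$ by an edge of conductance $k$ (resistance $1/k$). The reduced (grounded) Laplacian of this augmented network, obtained by deleting the row and column of $g$, is exactly $\bar{L}=L+k\mathbf{e}_d\mathbf{e}_d^T$, since grounding $g$ adds the conductance $k$ only to the $(d,d)$ entry. The standard grounded-Laplacian identity then gives $(\bar{L}^{-1})_{ii}=R_{ig}$, the effective resistance between $i$ and $g$: injecting a unit current at $i$ and extracting it at $g$ produces the potential difference $v_i-v_g=(\bar{L}^{-1})_{ii}$.

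The key step is to evaluate $R_{ig}$ in terms of the original graph. Because $g$ has degree one, the edge $d$--$g$ is a bridge, so the unit current from $i$ to $g$ must pass entirely through it; hence effective resistances add in series across this cut, giving
\[
(\bar{L}^{-1})_{ii}=R_{ig}=R_{id}+\tfrac1k,
\]
where $R_{id}$ is the effective resistance between $i$ and $d$ inside $\mathcal{G}$ (and $R_{dd}=0$ recovers $(\bar{L}^{-1})_{dd}=1/k$, consistent with $\bar{L}^{-1}\mathbf{e}_d=\tfrac1k\mathbf{1}$, which follows from $\bar{L}\mathbf{1}=k\mathbf{e}_d$). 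Maximizing over the attacked node then yields $\max_{i}(\bar{L}^{-1})_{ii}=\tfrac1k+\max_{i}R_{id}=\tfrac1k+\epsilon_f(d)$, so the attacker's best response is any node of maximum effective resistance from $d$, i.e.\ $\mathcal{B}^*(\mathfrak{D})=\arg\max_{j}R_{dj}$, which is the second assertion.

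Finally, substituting into the leader's problem gives $\min_{d}\bigl(\tfrac1k+\epsilon_f(d)\bigr)=\tfrac1k+\min_{d}\epsilon_f(d)$, so the defender minimizes the effective eccentricity and therefore chooses the effective center $\mathfrak{D}^*=\arg\min_{v\in\mathcal{V}}\epsilon_f(v)$, as claimed. I expect the main obstacle to be the rigorous justification of the series-additivity identity $(\bar{L}^{-1})_{ii}=R_{id}+1/k$: one must verify both that $\bar{L}$ is the grounded Laplacian of the augmented network and that the degree-one attachment of $g$ makes $d$--$g$ a cut edge across which effective resistance is additive. A purely algebraic alternative, sidestepping the electrical picture, relates the diagonal of $\bar{L}^{-1}$ to the Laplacian pseudoinverse $L^+$ through $R_{id}=L^+_{ii}+L^+_{dd}-2L^+_{id}$; this is self-contained but more calculation-heavy.
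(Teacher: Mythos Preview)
Your proposal is correct and follows essentially the same route as the paper: both augment $\mathcal{G}$ with a virtual ground node attached to the defense node by an edge of conductance $k$, identify $\bar{L}$ as the resulting grounded Laplacian so that $(\bar{L}^{-1})_{ii}=R_{i\ell}$, and then split this as $R_{id}+1/k$ (the paper records this decomposition in the proof of Proposition~\ref{prop:none}) to reduce the Stackelberg min--max to minimizing the effective eccentricity. Your write-up is simply more explicit about why the pendant edge to ground forces series additivity, which the paper takes for granted.
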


For the case of acyclic networks, Theorem \ref{thm:complexx} reduces to the following corollary.

\begin{corollary}[\textbf{Acyclic Networks}]\label{cor:costfew}
Consider the Stackelberg attacker-defender game on $J_2(\mathfrak{F}, \mathfrak{D})$ in \eqref{eqn:objectives}, with the defender as the game leader, over the connected undirected tree $\mathcal{G}$.  At equilibrium,  the defender chooses the  {\it center} of the graph and the attacker chooses the node with the greatest distance from the center. 
\end{corollary}

\begin{remark}[\textbf{Game Equilibriums and Network Centrality}]\label{rem:ssodjnf}
As mentioned before, the optimal location of the defense node for the objective function $J_1(\mathfrak{F}, \mathfrak{D})$ is the degree central node (Theorem \ref{thm:necsuefgame})  and for $J_2(\mathfrak{F}, \mathfrak{D})$ is the graph's center for acyclic networks (Corollary \ref{cor:costfew}) or effective center for general graphs (Theorem \ref{thm:complexx}). These network centralities (and consequently optimal defense node placements) can differ substantially from each other. One of such examples is the graph shown in Fig.~\ref{fig:center1} (a), in which by increasing the length of the  path, the two centralities become far apart.
\end{remark}

\begin{figure}[t!]
\centering
\includegraphics[scale=.4]{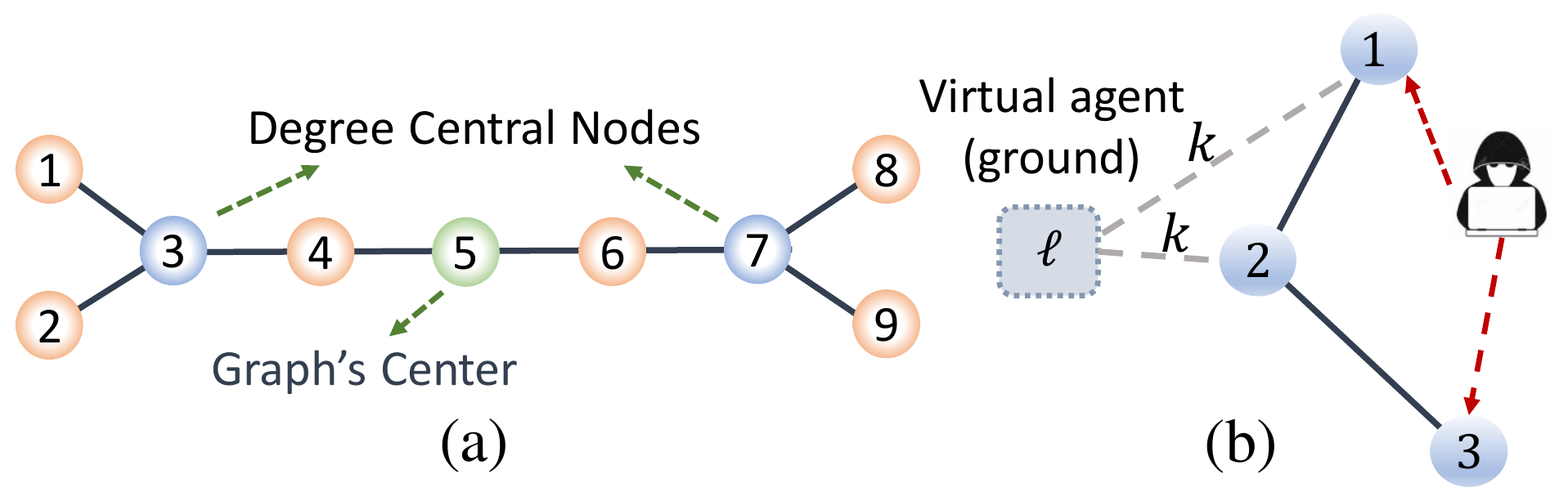}
\caption{(a) Optimal locations of the defense node for objective functions $J_1(\mathfrak{F}, \mathfrak{D})$ and $J_2(\mathfrak{F}, \mathfrak{D})$, (b) Extended graph and the virtual agent (ground). }
\label{fig:center1}
\end{figure}

\subsection{Multiple Attacked, Multiple defense Nodes}\label{sec:effectaafdd}

In order to tackle this problem, we interpret the self-feedback loops in the form of connections to  some virtual agent (or grounded node) as shown in Fig. \ref{fig:center1} (b). In this case, matrix $\bar{L}$ would be  a submatrix of the Laplacian matrix $L_{(n+1)\times (n+1)}$ of the extended graph (including $\ell$) where the row  and the column corresponding to $\ell$ are removed. Such submatrices are called grounded Laplacian in the literature \cite{ArxiveRobutness}. With this is mind, it is known that the $i$-th diagonal element of $\bar{L}^{-1}$ is $R_{i\ell}$, i.e., the effective resistance between node $v_i$ and the virtual node $\ell$ \cite{Ghosh2008}.\footnote{When the graph is a tree, the effective resistance and physical distance become the same.}
As an example, consider nodes 1 and 2 in Fig.~\ref{fig:center1} (b) which are chosen as defenders and nodes 1 and 3 which are under attack. In this case, we have $J_2(\mathfrak{F},\mathfrak{D})=1+\frac{1}{2}\sum_{i\in \mathfrak{F}}\bar{L}^{-1}_{ii}=1+\frac{1}{2}(R_{1\ell}+R_{3\ell})$. Based on this fact, the proof of the  following theorem is straightforward.
\begin{theorem}\label{thm:complex}
Consider the Stackelberg attacker-defender game on $J_2(\mathfrak{F}, \mathfrak{D})$ in \eqref{eqn:objectives} with $f$ defense nodes and  $f$ attack nodes, $f\geq 1$, with the defender as the game leader, over the connected undirected graph $\mathcal{G}$.  Denote the virtual agent corresponding to a set of $f$ defense nodes $\mathfrak{D}$ by $\ell_{\mathfrak{D}}$. Then, a solution of the game is when the defender chooses set $\mathfrak{D}$ in which the maximum sum of effective resistances between $\ell_{\mathfrak{D}}$ and all combinations of $f$ nodes in the network is minimized, i.e., $\mathfrak{D}^*=\arg\min_{\mathfrak{D}\subseteq \mathcal{V}}\max_{\mathfrak{F}\subseteq \mathcal{V}}\sum_{j\in \mathfrak{F}}R_{\ell_{\mathfrak{D}}j}$. In this case, the attacker chooses the set of $f$ attacked nodes as $\mathfrak{F}^*=\arg\max_{\mathfrak{F}\subseteq \mathcal{V}}\sum_{j\in \mathfrak{F}}R_{\ell_{\mathfrak{D}^*}j}$. 
\end{theorem}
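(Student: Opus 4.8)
The plan is to reduce the game payoff to a pure effective-resistance quantity and then read off the Stackelberg equilibrium directly. The starting point is the electrical interpretation established immediately before the statement: since $\bar{L}$ is the grounded Laplacian obtained by deleting the row and column of the virtual node $\ell_{\mathfrak{D}}$ from the Laplacian of the extended graph, its inverse satisfies $\bar{L}^{-1}_{ii} = R_{i\ell_{\mathfrak{D}}}$ \cite{Ghosh2008, ArxiveRobutness}. Substituting this into $J_2(\mathfrak{F}, \mathfrak{D})$ from \eqref{eqn:objectives} yields
\begin{equation}
J_2(\mathfrak{F}, \mathfrak{D}) = \frac{1}{2}f + \frac{1}{2}\sum_{j\in \mathfrak{F}} R_{\ell_{\mathfrak{D}}j}.
\end{equation}

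First I would observe that the leading term $\frac{1}{2}f$ is a fixed constant, independent of both the defender's set $\mathfrak{D}$ and the attacker's set $\mathfrak{F}$, because the number of attacked (and defended) nodes is fixed at $f$. Consequently every $\arg\max$ over the attacker's actions and every $\arg\min$ over the defender's actions is unchanged if we drop this term and work with $\sum_{j\in \mathfrak{F}} R_{\ell_{\mathfrak{D}}j}$ alone. This reduction is what makes the two players' objectives coincide (up to the additive constant) and turns the problem into a purely combinatorial optimization over effective resistances.

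Next I would write out the Stackelberg structure with the defender as leader. For any fixed defender choice $\mathfrak{D}$, the attacker (the maximizer) solves $\mathfrak{F}^*(\mathfrak{D}) = \arg\max_{\mathfrak{F}\subseteq \mathcal{V}} \sum_{j\in \mathfrak{F}} R_{\ell_{\mathfrak{D}}j}$, which is exactly the claimed best response. Anticipating this, the defender (the minimizer) selects $\mathfrak{D}^* = \arg\min_{\mathfrak{D}\subseteq \mathcal{V}} \sum_{j\in \mathfrak{F}^*(\mathfrak{D})} R_{\ell_{\mathfrak{D}}j} = \arg\min_{\mathfrak{D}\subseteq \mathcal{V}} \max_{\mathfrak{F}\subseteq \mathcal{V}} \sum_{j\in \mathfrak{F}} R_{\ell_{\mathfrak{D}}j}$, matching the statement. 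Since both feasible sets are finite, the optimizers are attained and the Stackelberg equilibrium exists.

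The step I expect to require the most care is justifying the effective-resistance identity in the multi-defender case: all $f$ self-feedback loops attach to a single virtual ground $\ell_{\mathfrak{D}}$ with edge weight $k$, so that $\bar{L} = L + kD_y$ is a single grounded Laplacian rather than a collection of independent groundings, and the diagonal identity $\bar{L}^{-1}_{ii} = R_{i\ell_{\mathfrak{D}}}$ applies verbatim. Once this identification is in place, which follows directly from the construction of the extended graph in Fig.~\ref{fig:center1}(b), the remainder is a transcription of the min-max definition of a Stackelberg equilibrium, consistent with why the statement can be proven in a few lines.
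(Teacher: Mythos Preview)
Your proposal is correct and follows exactly the route the paper intends: the paper does not give a separate appendix proof for this theorem but instead states that, once the identity $\bar{L}^{-1}_{ii}=R_{i\ell_{\mathfrak{D}}}$ from the extended-graph construction is in hand, the result is straightforward. Your write-up simply makes that ``straightforward'' explicit by substituting the identity into $J_2$, discarding the constant $\tfrac{1}{2}f$, and reading off the min--max Stackelberg characterization, which is precisely the reasoning the paper sketches in the paragraph preceding the theorem.
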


As it is seen from Theorem \ref{thm:complex}, finding the optimal set of defense nodes  requires a high level of computation. 

\begin{remark}\textbf{(The Effect of Increasing Connectivity)}:
Since the effective resistance between two nodes in the graph is an increasing function of edge weights  \cite{ArxiveRobutness}, adding extra edges to the network (or increasing the weight of edges)  decreases the diagonal elements of $\bar{L}^{-1}$ and consequently decreases the system $\mathcal{H}_2$ norm. Hence, unlike control law \eqref{subeq2}, increasing connectivity is beneficial from the defender's perspective for \eqref{subeq1}.
\label{rem:1s}
\end{remark}

\begin{appendix}

\subsection{Proof of Proposition \ref{prop:h2normsdouble}}
\label{sec:aewoegi}

\begin{proof}
We prove for the first case, the second case \eqref{subeq1} follows a similar procedure. We compute the $\mathcal{H}_2$ norm using the trace formula $||G||_2^2=\trace(B_2^{\sf T}\mathcal{W}_oB_2)$, where $\mathcal{W}_o$ is the observability Gramian $\mathcal{W}_o=\int_{0}^{\infty}e^{A^{\sf T}t}C^{\sf T}Ce^{At}$ and it is uniquely obtained from the Lyapunov equation $\mathcal{W}_o{A}+{A}^{\sf T}\mathcal{W}_o=-C^{\sf T}C$   with an additional constraint $\mathcal{W}_o v=\mathbf{0}_{2n}$ where $v=[\mathbf{1}_n^{\sf T} \hspace{1mm} \mathbf{0}_n^{\sf T}]^{\sf T}$ is the mode corresponding to the marginally stable eigenvalue of $A$. It is due to the fact that the marginally stable mode $v$ is not detectable, i.e., $Ce^{At}v=Cv=\mathbf{0}_{2n}$ for all $t \geq 0$, and since the rest of the eigenvalues are stable, the indefinite integral exists. The proof of the uniqueness of $\mathcal{W}_o$ is the same as \cite[Lemma 1]{Poola} and is omitted here. To calculate the observability Gramian, we have 
\begin{align}
\begin{bmatrix}
      W_{11} & W_{12}        \\[0.3em]
     W_{21} & W_{22}
     \end{bmatrix}A
     &+A^{\sf T}\begin{bmatrix}
      W_{11} & W_{12}        \\[0.3em]
     W_{21} & W_{22}
     \end{bmatrix} =\begin{bmatrix}
     \mathbf{0}_{n}  & \mathbf{0}_{n}       \\[0.3em]
     \mathbf{0}_{n} & -I_{n} 
     \end{bmatrix}
     \label{eqn:gramin}
\end{align}
By solving \eqref{eqn:gramin} we get $W_{11}=\frac{1}{2}LH^{-1}$, $W_{22}=\frac{1}{2}H^{-1}$ and $W_{12}=W_{21}=\mathbf{0}$. Hence we have $||G||_2^2=\trace(B_2^{\sf T}\mathcal{W}_oB_2)=\trace(F^TW_{11}F+F^TW_{22}F)$ which yields the result. 
\end{proof}

\subsection{Proof of Proposition \ref{prop:necsufgame1}}
\label{sec:aekjfnal}

\begin{proof}
It is easy to verify that each element of the matrix game $\mathcal{M}$ is 
\begin{align}
\mathcal{M}_{ij}=
  \begin{cases}
     \frac{d_j+1}{2k+2}        & \quad  i=j,\\
    \frac{d_j+1}{2}  & \quad i \neq j.\\
  \end{cases}
\label{eqn:single}
\end{align}
We first prove the sufficient condition, i.e., assume $k\leq \frac{\Delta_1-\Delta_2}{\Delta_2+1}$. Then, if the attacker changes its strategy (unilaterally) from the node with the maximum degree to some node $v_i$, according to \eqref{eqn:single} and the upper bound for $k$, the game value becomes $J=\frac{d_i+1}{2}\leq \frac{\Delta_1+1}{2k+2}$. Moreover, if the defender wants to change its strategy to another node $v_i$, based on \eqref{eqn:single}  since  the smallest element of each column is its diagonal element, it will get $J=\frac{d_i+1}{2}\geq \frac{\Delta_1+1}{2k+2}$. Hence, neither the attacker nor the defender are willing to change their strategy unilaterally.

Now suppose that having both attacker and defender choose the node with the largest degree is NE. Then we have to have $\frac{\Delta_1+1}{2k+2}\geq \frac{d_j+1}{2}$ for all $j=1,2,...,n$ which results in $k\leq \frac{\Delta_1-d_i}{d_i+1}$ for all $j=1,2,...,n$ and this proves the claim.
\end{proof}

\subsection{Proof of Theorem \ref{thm:necsuefgame}}
\label{sec:EWOGqwdUB}

\begin{proof}
When $k> \frac{\Delta_1-\Delta_2}{\Delta_2+1}$,  for each row (defender's action) of matrix $\mathcal{M}$,  the largest element (the best response of the attacker) will be $\frac{1}{2}(\Delta_1+1)$, except the row corresponding to the node with the largest degree. In that row, the largest element will be $\frac{1}{2}(\Delta_2+1)$. Since $\Delta_1 \geq \Delta_2$, the optimal action of the defender will be $v=\arg\max_{i\in \mathcal{V}}d_i$. The best response of the attacker will be the node with the second largest degree. This solution may not be unique, however, the optimal value of this game is unique and given by $J^*=\frac{\Delta_2+1}{2}$.
\end{proof}

\subsection{Proof of Theorem \ref{thm:multistack}}
\label{sec:thmssss}

\begin{proof}
For multiple attacked-multiple defense nodes case, each element of the matrix game $\mathcal{M}_{ij}$ (corresponding to defender decision set $\mathfrak{D}_i$ and attacker decision set $\mathfrak{F}_j$) is

\begin{align}
\mathcal{M}_{ij}=
  \begin{cases}
     \frac{\sum_{j\in \mathfrak{F}_j}d_k}{2k+2}+\frac{f}{2k+2}        &  i=j,\\ \\
     \frac{\sum_{k\in \mathfrak{F}_j\cap \mathfrak{D}_i}d_k}{2k+2}+\frac{\gamma_1^{ij}}{2k+2}\\
     +\frac{1}{2}(\sum_{k\in \mathfrak{F}_j\setminus \mathfrak{D}_i}d_k+\gamma_2^{ij})  &  i \neq j,\\
  \end{cases}
\label{eqn:multiple}
\end{align}
where $\gamma_1^{ij}=|\mathfrak{F}_j\cap \mathfrak{D}_i|$ and $\gamma_2^{ij}=|\mathfrak{F}_j\setminus \mathfrak{D}_i|=f-\gamma_1^{ij}$.
Since the defender is the game leader, it has to choose a row in game matrix $\mathcal{M}$ whose maximum element is minimum (over all other rows). When $k$ is lower bounded by $k\geq \frac{1}{2}(fd_{\rm \max}-2)$, considering a fixed set $\mathfrak{D}$ (set of defense nodes), for each set of attacked nodes $\mathfrak{F}$ we have
\begin{align}
\frac{\sum_{j\in\mathfrak{F}\cap \mathfrak{D}}d_j}{2k+2}+\frac{\gamma_1}{2k+2}\leq 2. \quad \forall \mathfrak{F}\subseteq \mathcal{V}
\label{eqn:ineqqq}
\end{align}
Inequality \eqref{eqn:ineqqq} together with \eqref{eqn:multiple} shows that for the row corresponding to set $\mathfrak{D}$, its largest  element  corresponds to the set of attackers  $\bar{\mathfrak{F}}$  for which $\bar{\mathfrak{F}}\cap \mathfrak{D}=\phi$. In order for this to happen, we must have $n\geq 2f$. In this case, $\gamma_1=0$ and the maximum element in the row corresponding to set $\mathfrak{D}$ is  (according to the second term in \eqref{eqn:multiple}) $\bar{\mathcal{M}}=\max_{\mathfrak{F}\subseteq \mathcal{V}\setminus \mathfrak{D}} \frac{1}{2}\sum_{j\in \mathfrak{F}}d_j+f$. Thus, the best action of the defender, to minimize that maximum row element, is to choose
$\bar{\mathfrak{D}}=\arg\max_{\mathfrak{D}\subseteq \mathcal{V}} \sum_{j\in \mathfrak{D}}d_j$, i.e., $f$ nodes with largest degrees in the graph.
\end{proof}

\subsection{Proof of Proposition \ref{prop:none}}
\label{sec:prop33}

\begin{proof}
As mentioned in Section \ref{sec:oegb}, the $j$-th diagonal element of $\bar{L}^{-1}$ is the effective resistance from node $v_j$ and the virtual node $\ell$ which is connected to the single defense node $v_i$ with an edge of weight (conductance) $k$ \cite{ArxiveRobutness}. Thus, we have $ [\bar{L}^{-1}]_{jj}=R_{\ell j}$. Hence, the value of each diagonal element of the game matrix $\mathcal{M}$ is $\mathcal{M}_{ii}=\frac{1}{2}+\frac{1}{2k}$ and each off-diagonal element is $\mathcal{M}_{ij}=\frac{1}{2}+\frac{1}{2k}+ \frac{1}{2} R_{ij}$. Thus, each diagonal element is strictly less than the elements of its corresponding row and column. Now, assume that a NE exists and let $(i^*,j^*)$ denote the equilibrium strategies of the attacker and defender. Thus, we should have $[\mathcal{M}]_{i^*j}\leq [\mathcal{M}]_{i^*j^*}\leq [\mathcal{M}]_{ij^*}$
for all $i\neq i^*$ and $j\neq j^*$. 
If element $[\mathcal{M}]_{i^*j^*}$ is a diagonal element, then the left inequality will be violated and if it is a non-diagonal element, the right inequality will be violated. 
\end{proof}

\subsection{Proof of Theorem \ref{thm:complexx}}
\label{sec:thm44}
\begin{proof}
We know that for the  game matrix $\mathcal{M}$ we have  $\mathcal{M}_{ij}=\frac{1}{2}+\frac{1}{2}R_{\ell j}$, where $v_{\ell}$ is the virtual agent connected to the defense node $v_i$ with an edge with weight $k$ and $v_j$ is the attacked node. As the defender is the leader of the Stackelberg game, it minimizes (over all rows) the maximum element of each row of $\mathcal{M}$. Thus, the optimal place for the defender is $v^*=\arg\min_{i}\max_{j}\mathcal{R}_{\ell j}$ and this is the effective center of the graph defined in Section \ref{sec:not}.  Note that this solution (strategies of the defender and attacker) may not be unique since the effective center of the network may not be a single node. However, the value of the game is unique.
\end{proof}

\end{appendix}

\section{Conclusion}
 \label{sec:conclusion}
A game-theoretic approach to the resilience of two canonical forms of second-order network control systems was discussed. For the case of a single attacked node and a single defense node, it was shown that the optimal location of the defense node for each of the two second-order systems introduces a specific network centrality measure. The extension of the game to the case of multiple attacked and defense nodes was also discussed and graph-theoretic interpretations of the equilibrium of the Stackelberg game for this case was investigated. An avenue for the future work is to extend these results to  directed networks.

\bibliographystyle{IEEEtran}
\bibliography{refs}

\end{document}